\newtheorem{teo}{Theorem}
\newtheorem{prop}[teo]{Proposition}
\theoremstyle{definition}
\newtheorem{defin}[teo]{Definition}
\newcounter{poppa}
\newtheorem{say}[poppa]{}
\newcommand{\ra}{\rightarrow}
\newcommand{\C}{\mathbb{C}}
\newcommand{\R}{\mathbb{R}}
\newcommand{\Zeta}{{\mathbb{Z}}}
\newcommand{\meno}{^{-1}}
\newcommand{\alfa}{\alpha}
\newcommand{\la}{\lambda}
\newcommand{\restr}[1]          {\vert_{#1}}
\newcommand{\cinf}{C^\infty}
\newcommand{\om}{\omega}
\renewcommand{\phi}{\varphi}
\newcommand{\lds}{\ldots}
\newcommand{\sx}{\langle}
\newcommand{\xs}{\rangle}
\newcommand{\lra}{\longrightarrow}
\newcommand{\OO}{\mathcal{O}}
\newcommand{\Keler}             {K\"{a}hler }
\newcommand{\debar }            {\bar {\partial } }
\newcommand{\chern}             {\operatorname{\mathrm{c}}}
\newcommand{\PP}{\mathbb{P}}
\newcommand{\Vol}{\operatorname{Vol}}
\newcommand{\Ric}{\operatorname {Ric} }
\newcommand{\su}{\mathfrak{su}}
\newcommand{\SU}{\operatorname{SU}}
\newcommand{\SO}{\operatorname{SO}}
\newcommand{\dual}[1][\cdot]{|#1|_{h^*}}
\newcommand{\hn}{h^{\otimes N}}
\newcommand{\vm}{dV_M}
\newcommand{\vx}{dV_X}
\newcommand{\xsl}{\rangle_{L^2}}
\newcommand{\xslx}{\rangle_{L^2(X)}}
\newcommand{\hats}{\hat{s}}
\newcommand{\hate}{\hat{e}}
\newcommand{\suv}{\su(V_N)}
\newcommand{\tr}{\operatorname{tr}}
\newcommand{\Mom}{\Phi}
\newcommand{\hardy}{\mathcal{H}^2}
\newcommand{\harm}{\mathcal{H}}
\newcommand{\hardyx}{\mathcal{H}^2(X)}
\newcommand{\scal}{\langle \, , \rangle}
\newcommand{\pai}{ \left (}
\newcommand{\ring}{ \right )}
\newcommand{\ptn}{Q_N}
\newcommand{\pzn}{P^0_N}
\newcommand{\id}{\operatorname{id}}
\newcommand{\omfsn} {\om_{N}}
\newcommand{\cinfr}{C^\infty(M,\R)}
\newcommand{\susn}{\mathscr{A}_N}
\newcommand{\susno}{\susn^0}
\newcommand{\dn}{{d_N}}
\newcommand{\T}{t}
\newcommand{\tp}{\hat{P}}
\begin{document}

\author{Alessandro Ghigi}

\title{On the approximation of functions\\
on a Hodge manifold}
\maketitle

\begin{abstract}
  If $(M, \om)$ is a Hodge manifold and $f\in \cinf(M)$ we construct a
  canonical sequence of real algebraic functions $f_N$ such that $f_N
  \to f$ in the $\cinf$ topology.  The definition of $f_N$ is inspired
  by Berezin-Toeplitz quantization and by ideas of Donaldson. The
  proof follows quickly from known results of Fine, Liu and Ma.
\end{abstract}

\section{Introduction}

Let $M$ be a complex projective algebraic manifold and let $\om$ be a
Hodge metric, i.e. a \Keler metric such that $[\om /2\pi]$ lies in the
image of $H^2(M, \Zeta) $ inside $H^2(M,\R)$.  Using ideas of
Donaldson \cite[\S 4]{donaldson-numerical-results}, Fine, Liu and Ma
\cite{fine-Calabi} have already introduced a canonical way to
approximate smooth functions on $M$. More precisely, using the Fourier
modes of the Szeg\"o kernel they defined a sequence of operators
\begin{gather*}
  \ptn : \cinf(M) \ra \cinf(M) \qquad N=1, 2, 3 , \lds
\end{gather*}
with the property that for any $f\in \cinf(M)$ one has $Q_Nf \to f$ in
the topology of $\cinf(M)$, see \eqref{eq:QN} below and Theorem
\ref{teo-liu-ma}.

The purpose of this note is to introduce another sequence of
operators, denoted $P_N$, which are a variant of the $\ptn$.  These
operators still have the property that $P_N f \to f$ in $\cinf(M)$,
but have also two other nice features: they admit a simple geometric
interpretation in terms of linear systems and moment map, and the
range of each $P_N$ is a finite dimensional subspace of $\cinf(M)$
whose elements are real algebraic functions. 

\smallskip

The author wishes to thank Roberto Paoletti for various crucial
discussions on the Szeg\"o kernel.  He also thanks Alberto Della
Vedova, Christine Laurent and Xiaonan Ma for useful emails.  Finally
he acknowledges financial support from the PRIN 2007 MIUR ``Moduli,
strutture geometriche e loro applicazioni''.

\section{The geometric construction}

Let $W$ be a finite-dimensional complex vector space. Any $w\in W$ can
be identified with a section $s_w\in H^0\left(\PP(W^*), \OO_{\PP(W^*)}
  (1)\right) $ defined by $s_w([\la]) (t \la) = t \la (w)$. Here
$t\la$ is the generic element of $\OO_{\PP(W^*), [\la]} (-1)$.  If
$\scal$ is a Hermitian product on $W$, the line bundle $
\OO_{\PP(W^*)} (1)$ inherits a Hermitian metric $h_{FS}$ such that
\begin{gather*}
\bigl  | s_w([\la])\bigr|_{h_{FS}} = \frac{|\la(w)|}{|\la|}.
\end{gather*}
If $\Theta (h_{FS})$ denotes the curvature of $h_{FS}$, then $\om_{FS}
= i \Theta(h_{FS})$ is by definition the Fubini-Study metric on
$\PP(W^*)$ induced by the product $\scal$.  The group $\SU(W,\scal)$
acts on $(\PP(W^*), \om_{FS})$ both holomorphically and isometrically
and
\begin{gather*}
  \Mom : \PP(W^*) \ra \su(W) \qquad \Mom([\la]) = i \biggl ( \frac{
    \sx \cdot , w \xs w }{|w|^2} - \frac{\id_W}{\dim W} \biggr )
\end{gather*}
is the moment map.  (Here $w$ is the vector such that $\la (\cdot) =
\sx \cdot , w\xs $.) This means that it is equivariant and that for
any $A\in \su(W)$
$$ 
d \pai \Mom, A\ring = -i_{\xi_Av}\om_{FS},
$$
where $\pai X, Y \ring := -\tr XY$ is the Killing product on $\su(W)$
and $\xi_A$ is the fundamental vector field corresponding to $A$.
Notice that
\begin{gather}
  \label{eq:momento-Pn}
  \pai \Mom\left( [\la] \right) , A \ring = -i \frac{\sx Aw, w\xs
  }{|w|^2} \qquad A\in \su(W),\ \la = \sx \cdot, w\xs \in W^*.
\end{gather}

Let $M^m$ be a projective manifold with an ample line bundle $L\ra M$
and let $h$ be a Hermitian metric on $L$. Denote by $\Theta(h)$ the
curvature of the Chern connection of $(L,h)$ and assume that
$\om:=i\Theta(h) \in 2\pi\chern_1(L)$ is a \Keler\ form.  Let $N$ be a
natural number.  Using $h$ and the volume form $\vm :=\om^m/m!$ one
can endow the space $V_N:=H^0(M, L^N)$ with the $L^2$-Hermitian
product
\begin{gather*}
  \sx s_1 , s_2\xsl : = \int_M \hn (s_1 (z), s_2(z) ) \vm(z) \quad s_1,
  s_2 \in V_N.
\end{gather*}
Correspondingly, the projective space $\PP(V^*_N)$ is endowed with the
Fubini-Study metric induced from $\sx\, , \xsl$.  Denote by
\begin{gather*}
  \phi_N : M \ra \PP(V^*_N).
\end{gather*}
the map associated to the complete linear system of $L^{ N}$, by
$\omfsn$ the Fubini-Study metric on $\PP(V_N^*)$ induced by Hermitian
product $\scal_{L^2}$ and by $\Mom_N : \PP(V_N^*) \ra \su(V_N)$ the
moment map of $(\PP(V_N^*), \omfsn)$ as described above.

Let $L^2(M, L^N)$ be the Hilbert space of $L^2$ sections of $L^N$.
For any $N$ let $\Pi_N : L^2(M, L^N) \ra V_N$ be the orthogonal
projector onto $V_N=H^0(M, L^N)$.  A function $f\in \cinf(M)$ induces
a sequence of \emph{Toeplitz} operators
\begin{gather*}
  T_{f, N} : L^2(M, L^N) \ra L^2(M,L^N) \\ T_{f,N} s := \Pi_N (f\cdot
  \Pi_Ns) \qquad s\in L^2(M, L^N) .
\end{gather*}
Since the range of $T_{f,N}$ is $V_N$ and $T_{f,N}\equiv 0 $ on
$V_N^\perp$, we will identify $T_{f,N}$ with its restriction to $V_N$
considered as an endomorphism of $V_N$. If $s\in V_N$, then $ T_{f,N}
s := \Pi_N (fs)$.  If $f$ is \emph{real valued}, then $T_{f,N}$ is
self-adjoint.  
Set 
$$
\dn= h^0(M, L^N)-1
$$
and let $T_{f,N}^0$ be the trace-free part of $T_{f,N}$:
\begin{gather*}
  T_{f,N}^0 := T_{f,N} - \frac{ \tr T_{f,N}}{\dn+1} \id_{V_N}.
\end{gather*}
Then $iT^0_{f,N} \in \su(V_N)$ and $(\,\cdot\, , i T^0_{f,N} )$ is a
linear function on $\su(V_N)$. The pull-back of this function to $M$
via the map $\Mom_N \circ \phi_N : M \ra \suv$ is denoted by $\pzn f$.
More explicitly
\begin{gather}
  \label{eq:pzn-1}
  \pzn f\, (z) := \pai \Mom_N \left (\phi_N (z)\right ), i T^0_{f, N}
  \ring.
\end{gather}
So we have defined a sequence of operators $\pzn : \cinfr \ra
\cinfr$. These operators are linear since the map $f \mapsto T_{f,N}$
is linear.  

\section{Relation with the Szeg\"o kernel}

The operators $\pzn$ that we just defined (and their kins $P_N$ to be
defined below) admit a simple description in terms of the Fourier
modes of the Szeg\"o kernel. This will enable to deduce very quickly
some of their analytic properties from known results on the Szeg\"o
kernel.

We start by recalling some important facts regarding the Szeg\"o
kernel.  Set $\Vol(M) = \int_M \vm = (2\pi)^m \chern_1(L)^m/m!$.  Let
$h^*$ denote the Hermitian metric on $L^*$ induced from $h$ and let
$\dual$ denote the corresponding norm.  Define $ \rho : L^* \ra \R $
by $\rho(\la): = \dual[\la]^2 -1$ and set
\begin{gather*}
  D:=\{\la \in L^* : \rho(\la) <0\}=\{\la \in L^* : \dual[\la] <1\}\\
  X:=\{\la \in L^* : \rho(\la) =0\}=\{\la \in L^* : \dual[\la] =1\} .
\end{gather*}
$D$ is a strictly pseudoconvex domain in $L^*$ since $\om = i \Theta $
is positive.  $X=\partial D$ is a smooth hypersurface in $L^*$ and is
a principal $S^1$-bundle over $M$.  Set $\alfa := i \debar \rho
\restr{X}$ and
\begin{gather*}
  \vx:= \frac{\alfa \wedge (d \alfa)^m}{2\pi m!} = \frac{\alfa \wedge
    \pi^*\vm}{2\pi}.
\end{gather*}
If $\rho_N$ denotes the representation of $S^1$ on $\C$ given by
multiplication by $e^{iN\theta}$, then the associated bundle
$X\times_{\rho_{-N}} \C$ is isomorphic to $ L^N$ via the maps
\begin{gather}
  \label{correspondence}
  [\la,z] \mapsto z \cdot \left ( \la\meno(1) \right )^{\otimes N}
  \qquad u \mapsto [\la, \la^{\otimes N} (u) ].
\end{gather}
Therefore a section $s$ of $L^N$ corresponds to an equivariant
function $\hat{s}: X \ra \C$ defined by $\hat{s} (x) = x^{\otimes N}
(s(\pi(x)))$. The equivariance means that
\begin{gather}
  \label{eq:equivariance}
  \hat{s}( e^{i\theta} \la) = \rho_{-N}(e^{-i\theta}) \hat{s}(\la) =
  e^{iN\theta} \hat{s}(\la).
\end{gather}
Denote by $\hardyx$ the space of $L^2$ functions on $X$ that are
annihilated by the Cauchy-Riemann operator $\debar_b$ (see
\cite[p. 321]{zelditch-theorem-Tian}).  Since $D$ is strictly
pseudoconvex $\hardyx$ coincides with the space of $L^2$--boundary
values of holomorphic functions on $D$. It is a closed $S^1$-invariant
subspace of $L^2(X)$, hence splits as a direct sum $\hardyx =
\oplus_N\hardy_{N\in \Zeta}(X)$, where $\hardy_N(X)$ is the set of
functions $\hat{s}$ in $\hardyx$ that satisfy \eqref{eq:equivariance}.
Via the correspondence \eqref{correspondence} the holomorphic sections
of $L^N$ correspond to elements of $\hardy_N(X)$. If $s_1, s_2 \in
\cinf(M, L^N)$, then
\begin{gather*}
  \sx s_1, s_2 \xsl = \int_X \hat{s}_1 \overline{\hat{s}_2 } \vx= \sx
  \hats_1, \hats_2 \xslx.
\end{gather*}
If $x\in X$ the linear functional
\begin{gather*}
  V_N \lra \C \qquad s \mapsto x^{\otimes N}\left (s\left (
      \pi(x)\right) \right) = \hats(x)
\end{gather*}
is represented by a section $e_{x,N} \in V_N$. Equivalently, $\hats(x)
= \sx s, e_{x,N} \xsl = \sx \hats, \hate_{x,N} \xslx$ for any $s\in
V_N$.  The sections $e_{x,N}$ are called \emph{coherent states} in the
mathematical physics literature.

We use $\Pi_N$ also to denote the projection $\Pi_N : L^2(X) \ra
\hardy_N(X)$. Let $\Pi_N(x,x')$ be the Schwartz kernel of $\Pi_N$ (we
use $\vx$ to identify functions with densities).  In the following
Proposition we gather various well-known and elementary properties of
$\Pi_N$.
\begin{prop}
  (a) $\Pi_N \in \cinf(X\times X)$. (b) $\Pi_N(x,x') =
  \overline{\Pi_N(x', x)}$.  (c) For any $x\in X$, $\hate_{x,N} =
  \Pi_N( \cdot, x)$.  (d) There are functions $K_N \in \cinf(M\times
  M)$ and $E_N \in \cinf(M)$ such that if $x, x'\in X$, $z=\pi(x),
  z'=\pi(x') $, then
  \begin{gather*}
    K_N(z,z') = |\Pi_N(x, x')|^2 \qquad E_N(z) = \Pi_N(x,x).
  \end{gather*}
  (e) $K_N(z,z') = K_N(z',z)$.  (f) If $\pi(x) = z$ and
  $\{s_j\}_{j=0}^\dn$ is an orthonormal basis of $V_N$, then
  \begin{equation}
    \label{eq:EN}
    E_N(z) = \sum_{j=0}^\dn |s_j(z)|_h^2 = |e_{x,N}|^2_{L^2}.
  \end{equation}
  (g) For $N$ large enough the function $E_N $ is strictly positive.
\end{prop}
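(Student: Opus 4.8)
The plan is to reduce the entire statement to the single elementary observation that, since $M$ is compact, $V_N=H^0(M,L^N)$ is finite dimensional, hence $\hardy_N(X)$ is a finite-dimensional subspace of $L^2(X)$ and $\Pi_N$ is a finite-rank orthogonal projector. Fixing an orthonormal basis $\{s_j\}_{j=0}^{\dn}$ of $(V_N, \sx\,,\xsl)$, the isometry $\sx s_i, s_j\xsl=\sx\hats_i,\hats_j\xslx$ recorded above shows that $\{\hats_j\}$ is an orthonormal basis of $\hardy_N(X)$, and a one-line computation with this basis gives the reproducing kernel
\begin{gather*}
  \Pi_N(x,x')=\sum_{j=0}^{\dn}\hats_j(x)\,\overline{\hats_j(x')}.
\end{gather*}
Since $\hats_j(x)=x^{\otimes N}\bigl(s_j(\pi(x))\bigr)$ is a composition of smooth maps, each $\hats_j\in\cinf(X)$, and the displayed formula is a finite sum, which proves (a); (b) is immediate from the same formula (equivalently from $\Pi_N=\Pi_N^*$). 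For (c) I would combine the defining property $\hats(x)=\sx\hats,\hate_{x,N}\xslx$ of the coherent state with the reproducing property $\hats(x)=(\Pi_N\hats)(x)=\int_X\Pi_N(x,x')\hats(x')\,\vx(x')=\sx\hats,\overline{\Pi_N(x,\cdot)}\xslx$, and then use (b) to rewrite $\overline{\Pi_N(x,\cdot)}=\Pi_N(\cdot,x)$; as this holds for every $\hats\in\hardy_N(X)$, we conclude $\hate_{x,N}=\Pi_N(\cdot,x)$.

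For (d)--(e) the one structural input is the $S^1$-equivariance of $\Pi_N$: since $\Pi_N$ commutes with the $S^1$-action and elements of $\hardy_N(X)$ obey \eqref{eq:equivariance}, one gets the transformation law
\begin{gather*}
  \Pi_N(e^{i\theta}x,e^{i\theta'}x')=e^{iN(\theta-\theta')}\,\Pi_N(x,x'),
\end{gather*}
which can also be read off directly from the kernel formula via \eqref{eq:equivariance}. Hence $|\Pi_N(x,x')|^2$ is invariant under the $S^1\times S^1$-action and descends to a function $K_N$ on $M\times M$, while $\Pi_N(x,x)$ is $S^1$-invariant and descends to a function $E_N$ on $M$; both are smooth because $\pi\times\pi$, resp.\ $\pi$, is a surjective submersion. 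Assertion (e) is then just (b) restricted to the diagonal.

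For (f), the point is that $x\in X$ means $|x|_{h^*}=1$, so $x^{\otimes N}$ is a unit covector in the one-dimensional fibre $(L^N_z)^*$; pairing a unit covector with a vector in a one-dimensional Hermitian line gives a number of modulus equal to the vector's norm, so $|\hats_j(x)|=|s_j(z)|_h$ and summing yields $E_N(z)=\sum_j|s_j(z)|_h^2$. Taking $s=e_{x,N}$ in $\sx s,e_{x,N}\xsl=\hats(x)$ gives $|e_{x,N}|^2_{L^2}=\hate_{x,N}(x)=\Pi_N(x,x)=E_N(z)$ by (c). Finally, for (g): since $L$ is ample, $L^N$ is base-point free (indeed very ample) for all $N$ large, so for each $z$ some section in $V_N$ is nonzero at $z$, whence $E_N(z)=\sum_j|s_j(z)|_h^2>0$.

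I do not expect a genuine obstacle here: once $\Pi_N$ is identified with the finite-rank reproducing-kernel projector and the equivariance law is in hand, the rest is bookkeeping. The only two points deserving a word of care are that $\hardy_N(X)$ is finite dimensional, so that $\Pi_N$ has an honest smooth kernel rather than merely a distributional Schwartz kernel, and that quotienting by the $S^1$-actions preserves smoothness, which is guaranteed by $\pi$ being a submersion.
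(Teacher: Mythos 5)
Your proposal is correct and follows essentially the same route as the paper: the kernel formula $\Pi_N(x,x')=\sum_j\hats_j(x)\overline{\hats_j(x')}$ over an orthonormal basis, the $S^1$-equivariance law for the descent to $K_N$ and $E_N$, the unit-covector observation for (f), and base-point freeness from ampleness for (g). The only differences are cosmetic: you phrase (c) via the reproducing property and uniqueness of the Riesz representative instead of computing the coefficients of $e_{x,N}$ in the basis, and you obtain $|e_{x,N}|^2_{L^2}=E_N(z)$ from $\hate_{x,N}(x)=\Pi_N(x,x)$ rather than by expanding in the basis; both are equivalent to the paper's steps.
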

\begin{proof}
  Let $\{s_j\}_{j=0}^\dn$ be an orthonormal basis of $V_N$. Then
  \begin{gather}
    \label{proietto}
    \Pi_N(x, x') = \sum_{j=0}^\dn \hats_j(x) \overline{\hats_j(x')}
  \end{gather}
  This proves (a) and (b).  If $e_{x,N} = \sum_j a_j s_j$, then
  \begin{gather*}
    \overline{a_j} = \sx s_j, e_{x,N} \xsl =
    \hats_j(x).
  \end{gather*}
  Hence $e_{x,N} = \sum_j \overline {\hats_j(x)} s_j$. Together with
  \eqref{proietto} this proves (c).  Observe that
  \begin{gather*}
    \Pi_N(e^{i\theta} x , e^{i\theta'} ) = e^{iN(\theta - \theta')}
    \Pi_N(x, x').
  \end{gather*}
  Hence the function $|\Pi_N|^2$ is $S^1\times S^1$--invariant on
  $X\times X$ and descends to a smooth function $K_N \in \cinf(M\times
  M)$.  Similarly $\Pi_N(x,x)$ is $S^1$-invariant on $X$ and descends
  to a smooth function $E_N\in \cinf(M)$. This proves (d). (e) follows
  from (b).  To prove (f) it is enough to observe that if $x\in X$,
  then $|x^{\otimes N}|_{h^*} =1$, hence $|s_j(z)|_h = |x^{\otimes N }
  \left ( s_j(z) \right ) | = |\hats_j (x)|$.  This yields immediately
  the first equality. For the same reason
  \begin{gather*}
    |e_{x,N}|^2_{L^2} = \sum_{j=0}^{\dn} \bigl | \sx e_{x,N} , s_j
    \xsl \bigr |^2 = \sum_{j=0}^{\dn} \left | x^{\otimes N} \left (
        s_j (z) \right )\right |^2 = \sum_{j=0}^{\dn} | s_j (z) |^2
    _h.
  \end{gather*}
  The positivity of $E_N$ is equivalent to the fact that $L^N$ has no
  base points, so (g) follows from the ampleness of $L$.
\end{proof}
\begin{teo}
  If $f\in \cinfr$ and $z\in M$, then
  \begin{gather}
    \label{eq:pzn-2}
    \pzn f (z) = \int_M \frac{K_N(z,z')}{E_N(z) } f(z') \vm(z') -
    \frac{ \tr T_{f,N}}{\dn + 1}
    \\
    \label{eq:traccia}
    \tr T_{f,N} = \int_M E_N(z) f(z) \vm (z).
  \end{gather}
\end{teo}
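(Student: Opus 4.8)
The plan is to unwind definition~\eqref{eq:pzn-1} of $\pzn f$ by feeding the coherent states of the Proposition into the explicit moment map formula~\eqref{eq:momento-Pn}. The first and only delicate point is to recognise which vector of $V_N$ represents $\phi_N(z)$. Fix $z\in M$ and $x\in X$ with $\pi(x)=z$. Since $|x^{\otimes N}|_{h^*}=1$, the evaluation functional $V_N\to\C$, $s\mapsto\hats(x)=x^{\otimes N}\bigl(s(z)\bigr)$, is exactly the linear form $\sx\,\cdot\,,e_{x,N}\xsl$ that defines the coherent state. As $\phi_N(z)$ is, by definition of the map attached to a complete linear system, the line in $\PP(V_N^*)$ spanned by this evaluation functional, it corresponds --- in the notation of~\eqref{eq:momento-Pn} applied with $W=V_N$ --- to the vector $w=e_{x,N}$. (I tacitly assume $N$ is large enough, equivalently $E_N>0$ by part~(g), so that $\phi_N$ and hence $\Mom_N\comp\phi_N$ is a genuine morphism.)

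Next I would substitute $A=iT^0_{f,N}\in\suv$ and $w=e_{x,N}$ into~\eqref{eq:momento-Pn}: the two factors $i$ cancel and part~(f) gives $|e_{x,N}|^2_{L^2}=E_N(z)$, so that
\begin{gather*}
  \pzn f(z)=\pai\Mom_N(\phi_N(z)),iT^0_{f,N}\ring=\frac{\sx T^0_{f,N}e_{x,N},e_{x,N}\xsl}{E_N(z)}.
\end{gather*}
Writing $T^0_{f,N}=T_{f,N}-\frac{\tr T_{f,N}}{\dn+1}\id_{V_N}$ and using $|e_{x,N}|^2_{L^2}=E_N(z)$ once more, the $\id_{V_N}$ term contributes precisely the constant $-\tr T_{f,N}/(\dn+1)$ appearing in~\eqref{eq:pzn-2}; it therefore only remains to compute $\sx T_{f,N}e_{x,N},e_{x,N}\xsl$.

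Since $e_{x,N}\in V_N$ and $\Pi_N$ is self-adjoint and fixes $V_N$, one has $\sx T_{f,N}e_{x,N},e_{x,N}\xsl=\sx\Pi_N(fe_{x,N}),e_{x,N}\xsl=\sx fe_{x,N},e_{x,N}\xsl=\int_M f(z')\,|e_{x,N}(z')|^2_h\,\vm(z')$. To close the argument I identify the density $|e_{x,N}(z')|^2_h$: choosing $x'\in X$ over $z'$ and using $|x'^{\otimes N}|_{h^*}=1$ together with part~(c) gives $|e_{x,N}(z')|^2_h=|\hate_{x,N}(x')|^2=|\Pi_N(x',x)|^2$, which by parts~(d) and~(e) equals $K_N(z',z)=K_N(z,z')$. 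Substituting yields~\eqref{eq:pzn-2}. Finally~\eqref{eq:traccia} follows by computing the trace in an orthonormal basis $\{s_j\}_{j=0}^{\dn}$ of $V_N$: the same manipulation gives $\sx T_{f,N}s_j,s_j\xsl=\int_M f\,|s_j|^2_h\,\vm$, and summing over $j$ while invoking~\eqref{eq:EN} produces $\tr T_{f,N}=\int_M E_N\,f\,\vm$.

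Once the Proposition is available, the computation is essentially bookkeeping; the step I expect to need genuine care is the first one, namely correctly matching $\phi_N(z)$ with the coherent state $e_{x,N}$ and keeping straight the conjugations and the sign in~\eqref{eq:momento-Pn}. After that everything reduces to mechanical substitution.
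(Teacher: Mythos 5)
Your proposal is correct and follows essentially the same route as the paper: identify $\phi_N(z)$ with the class of the evaluation functional $\la=\sx\,\cdot\,,e_{x,N}\xsl$, plug $A=iT^0_{f,N}$ into \eqref{eq:momento-Pn}, use $|e_{x,N}|^2_{L^2}=E_N(z)$ and $|e_{x,N}(z')|^2_h=|\Pi_N(x',x)|^2=K_N(z,z')$, and compute the trace in an orthonormal basis via \eqref{eq:EN}. The bookkeeping (cancellation of the factors of $i$, the trace-free splitting, the self-adjointness of $\Pi_N$) matches the paper's argument step for step.
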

\begin{proof}
  Let $x\in X $ be such that $z=\pi(x)$. Set $\la (\cdot) = \sx \cdot,
  e_{x,N} \xsl$. Then $\la \in V_N^*$ and $\phi_N(z) = [\la]$. By
  \eqref{eq:pzn-1} and \eqref{eq:momento-Pn}
  \begin{gather*}
    \pzn f (z) = \pai \Mom_N \left ( \phi_N(z ) \right ) , i T^0_{f,N}
    \ring = -i \frac{\sx i T_{f,N}^0 e_{x,N}, e_{x,N}\xsl
    }{|e_{x,N}|_{L^2}^2}=\\
    = \frac{\sx T_{f,N} e_{x,N}, e_{x,N}\xsl }{|e_{x,N}|_{L^2}^2} -
    \frac{\tr T_{f,N}} {\dn + 1}\\
    \sx T_{f,N} e_{x,N} , e_{x,N} \xsl = \int_M f(z') \, | e_{x,N}
    (z')|^2_h\ \vm(z').
  \end{gather*}
  If $z'=\pi(x')$, then $|e_{x,N}(z') |_h^2 = |\hate _{x,N} (x')|^2 =
  |\Pi_N(x', x)|^2 = K_N(z',z) = K_N(z,z')$.  Hence
  \begin{gather*}
    \sx T_{f,N} e_{x,N} , e_{x,N} \xsl = \int_M K_N(z,z')
    f(z')\vm(z').
  \end{gather*}
  By \eqref{eq:EN} $|e_{x,N}|^2_{L^2}= E_N(z)$, so
  \begin{gather*}
    \frac{\sx T_{f,N} e_{x,N}, e_{x,N}\xsl }{|e_{x,N}|_{L^2}^2} =
    \frac{1}{E_N(z) } \int_M K_N(z,z') f(z') \vm(z')=\\
    = \int_M \frac{K_N(z,z')}{E_N(z) } f(z') \vm(z').
  \end{gather*}
  This proves \eqref{eq:pzn-2}.  To prove \eqref{eq:traccia} observe
  that if $\{s_j\}_{j=0}^\dn$ is an orthonormal basis of $V_N$, then
  \begin{gather*}
    \tr T_{f,N} = \sum_{j=0}^\dn \sx T_{f,N} s_j, s_j \xsl =
    \sum_{j=0} ^\dn \sx f  s_j, s_j \xsl =\\
    = \sum_{j=0}^\dn \int_M f(z') |s_j(z')|^2_h \vm(z') = \int_M f(z')
    E_N(z') \vm(z').
  \end{gather*}
  In the last line we have used \eqref{eq:EN}.
\end{proof}
\begin{defin}
  For $N=1, 2, 3, \lds$ let $P_N : \cinfr \ra \cinfr$ be the linear
  operator defined by
  \begin{gather*}
    P_N f (z) := \int_M \frac{K_N(z,z')}{E_N(z) } f(z') \vm(z') .
  \end{gather*}
\end{defin}

In \cite[\S 4]{donaldson-numerical-results} Donaldson introduced
operators $Q_N : \cinf(M) \ra \cinf(M)$, defined by
\begin{gather}
  \label{eq:QN}
  Q_Nf(z) = \frac{\Vol(M)}{\dn+1} \int_M K_N(z,z') f(z')\vm(z').
\end{gather}
(His definition is actually more general since integration is
performed with respect to a measure that does not necessarily coincide
with $\vm$.)  In \cite[Appendix, Thm. 26]{fine-Calabi} Kefeng Liu and
Xiaonan Ma proved the following result.
\begin{teo}
  [Liu-Ma]
  \label{teo-liu-ma}
  For every $k$ there is a constant $C_k$ such that
  \begin{gather}
    \label{eq:liu-ma}
    ||Q_N f - f ||_{C^k(M)} \leq \frac{C_k}{N} ||f||_{C^{k+1}(M)}.
  \end{gather}
\end{teo}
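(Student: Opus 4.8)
The plan is to exhibit the kernel $\tfrac{\Vol(M)}{\dn+1}K_N(z,z')$ of \eqref{eq:QN} as an approximate identity whose width shrinks like $N^{-1/2}$, and to extract the rate $1/N$ from a Taylor expansion of $f$ about the base point. The one input not contained in the excerpt is the near-diagonal asymptotic expansion of the Szeg\"o kernel $\Pi_N$, which one should take in the strong form carrying \emph{uniform remainder estimates in all $C^k$ norms}: in \Keler normal coordinates centred at a point $z$, with the fibre variable rescaled by $\sqrt N$, the function $N^{-m}\Pi_N$ converges in every $C^k$ norm, uniformly in $z$, to the model Bargmann--Fock kernel, with error $O(N^{-1/2})$. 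Two consequences should be isolated at the outset. Integrating \eqref{eq:EN} over $M$ gives $\int_M E_N\,\vm = \dn+1$, so the constant $\tfrac{\Vol(M)}{\dn+1}$ is precisely the reciprocal of the mean of $E_N$; since $E_N=(N/2\pi)^m(1+O(1/N))$ in every $C^k$ norm, uniformly, we get $\tfrac{\Vol(M)}{\dn+1}E_N\to 1$ in $C^k(M)$ at rate $1/N$. This already handles $f\equiv 1$, and, comparing with the operator $P_N$ defined above, it shows that $Q_N$ and $P_N$ differ by an operator of $C^k$-norm $O(1/N)$, so one may work with either.

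The second step is localisation. The off-diagonal decay of the Szeg\"o kernel, $|\Pi_N(x,x')|\le C_M N^{m}e^{-c\sqrt N\, d(x,x')}$, together with its analogues for all $z$-derivatives of the descended kernel $K_N(z,z')/E_N(z)$, shows that for a fixed small $\delta>0$ the contribution to $Q_N f(z)-f(z)$ of the region $\{d(z,z')\ge\delta\}$ is $O(N^{-\infty})\|f\|_{C^0(M)}$, uniformly in $z$, and the same for all its $z$-derivatives. Hence it suffices to analyse the integral over a coordinate ball $B(z,\delta)$.

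On $B(z,\delta)$ one uses normal coordinates $u$ and rescales $u=v/\sqrt N$. By the near-diagonal expansion the measure $\tfrac{\Vol(M)}{\dn+1}K_N(z,z+v/\sqrt N)\,N^{-m}\,dv$ converges to a Gaussian density $\propto e^{-|v|^2/2}\,dv$, with errors of size $O(N^{-1/2})$ in every $C^k$ norm in $z$, and, crucially, its leading term is \emph{even} in $v$. One then Taylor-expands $f(z+v/\sqrt N)$ about $z$ and integrates: the zeroth-order term reproduces $f(z)$ up to the normalisation error $O(1/N)\|f\|_{C^0(M)}$ of the first step; the first-order term $N^{-1/2}\,df_z(v)$ is annihilated by the evenness of the leading density, leaving only an $O(1/N)$ contribution from the odd correction to the kernel; and the remaining Taylor terms contribute $O(1/N)$ times a $C^2$-norm of $f$ — the factor $1/N$ coming from $|u|^2=|v|^2/N$ and $\int e^{-|v|^2/2}|v|^2\,dv<\infty$. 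This yields $|Q_N f(z)-f(z)|\le \tfrac{C}{N}\|f\|_{C^2(M)}$ pointwise and uniformly in $z$. To promote the estimate to $C^k(M)$ one differentiates under the integral sign: a $z$-derivative either lands on $f$ — this is where the extra regularity of $f$ on the right of \eqref{eq:liu-ma} comes from — or on the kernel, which is controlled by the $C^k$-uniform expansion and decay bounds of the first two steps; iterating the Taylor argument through order $k$ gives \eqref{eq:liu-ma}.

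The whole difficulty is concentrated in the first step: one needs the Szeg\"o (equivalently Bergman) kernel expansion \emph{together with uniform estimates on all derivatives and in all $C^k$ norms}, not merely pointwise on the diagonal, and one must keep careful track of how many derivatives of $f$ are consumed. This is exactly what the appendix of \cite{fine-Calabi} provides, so we simply quote it.
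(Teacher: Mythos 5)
The paper does not actually prove this theorem: it is quoted as a black box from the appendix of \cite{fine-Calabi} (Theorem 26, with the norm on the right-hand side corrected following \cite{ma-marinescu-Berezin-Toeplitz}). Your sketch therefore has to stand on its own, and its overall strategy --- uniform near-diagonal Bergman/Szeg\"o asymptotics, off-diagonal decay to localise, rescaling by $\sqrt N$ and Taylor expansion against an approximately Gaussian, even leading density --- is indeed the strategy of the cited source. The preliminary observations ($\int_M E_N\,\vm=\dn+1$, $Q_N1\to1$ at rate $1/N$, comparison of $Q_N$ with $P_N$) are fine. One caveat on sourcing: what you may legitimately ``simply quote'' from that appendix is the kernel expansion with uniform $C^k$ remainders (due to Dai--Liu--Ma / Ma--Marinescu), not Theorem 26 itself, which is the statement being proved; as written, the last sentence of your sketch is dangerously close to circular.

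The genuine gap is the derivative count, which is precisely the delicate point of this statement (the paper notes that the originally published form of the estimate had to be corrected for exactly this reason). Your Taylor argument honestly delivers $|Q_Nf(z)-f(z)|\le (C/N)\,\|f\|_{C^2(M)}$, i.e.\ after $k$ differentiations an estimate with $\|f\|_{C^{k+2}}$ on the right, and the closing claim that ``iterating the Taylor argument through order $k$ gives \eqref{eq:liu-ma}'' does not bridge the gap to the stated $\|f\|_{C^{k+1}}$. Nor can it: the second-order Taylor term produces the genuine contribution $\tfrac{c}{N}\Delta f$ (the leading term of the Berezin-transform expansion), which involves two derivatives of $f$. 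Concretely, testing your approximate-identity picture with $f=N^{-1/2}g(\sqrt N\,u)$ in normal coordinates ($g\ge 0$ smooth, $g(0)=0$) gives $\|f\|_{C^1}=O(1)$ while the kernel, which smooths at scale $N^{-1/2}$, yields $\|Q_Nf-f\|_{C^0}\asymp N^{-1/2}$, so no parity cancellation or iteration within your scheme can produce a $1/N$ rate at the cost of only one extra derivative. So either you should state and prove the $C^{k+2}$ version that your method actually gives, or you need a genuinely different mechanism (beyond second-order Taylor expansion against the rescaled kernel) to reach the inequality in the form printed here; as it stands the sketch does not prove \eqref{eq:liu-ma}.
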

In \cite[p. 519] {fine-Calabi} the estimate is given in terms of
$||f||_{C^k(M)} $ on the right hand side, due to a small inaccuracy in
the computations. The correct formulation, as above, is given in
\cite[p. 1 note 1] {ma-marinescu-Berezin-Toeplitz}. I thank Professor
Ma for pointing this out to me.

It is important to notice that the operators $Q_N$ represent the
derivative of the map $\om \mapsto \om_N$, see
\cite[pp. 495-496]{fine-Calabi}.

\begin{teo} \label{main} (a) There is $C$ such that
  \begin{gather*}
    \left|\frac{ \tr T_{f,N}}{\dn +1} - \frac{1}{\Vol(M)} \int_M f(z)
      \vm(z) \right | \leq \frac{C}{N} ||f||_\infty.
  \end{gather*}
  (b) For any $k$ there is $C_k$ such that
  \begin{gather*}
    ||P_N f - f ||_{C^k(X)} \leq \frac{C_k}{N} ||f||_{C^{k+1}(X)}.
  \end{gather*}
\end{teo}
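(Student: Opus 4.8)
The plan is to reduce both statements to the Liu--Ma estimate \eqref{eq:liu-ma} by showing that $P_N$ is exactly the \emph{normalized} version of $Q_N$, namely $P_N f = Q_N f / Q_N 1$. First I would record the identity
\begin{gather*}
  \int_M K_N(z,z') \,\vm(z') = E_N(z)
\end{gather*}
for every $z\in M$. To see it, fix $x\in X$ with $\pi(x)=z$; the function $x'\mapsto |\Pi_N(x,x')|^2$ is $S^1$-invariant and descends to $K_N(z,\cdot)$, so the integral equals $\int_X |\Pi_N(x,x')|^2\,\vx(x')$. By part (b) of the Proposition this is $\int_X \Pi_N(x,x')\Pi_N(x',x)\,\vx(x')$, and since $\Pi_N$ is the Schwartz kernel of the orthogonal (hence idempotent and self-adjoint) projector onto $\hardy_N(X)$, this equals $\Pi_N(x,x)=E_N(z)$. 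Taking $f\equiv 1$ in \eqref{eq:QN} now gives $Q_N 1 = \frac{\Vol(M)}{\dn+1}\,E_N$, i.e. $E_N = \frac{\dn+1}{\Vol(M)}\,Q_N 1$; comparing this with the definitions of $P_N$ and $Q_N$ yields $P_N f = Q_N f / Q_N 1$ at every point where $E_N>0$, which by part (g) of the Proposition is everywhere once $N$ is large.

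For part (a), formula \eqref{eq:traccia} gives
\begin{gather*}
  \frac{\tr T_{f,N}}{\dn+1} - \frac{1}{\Vol(M)}\int_M f\,\vm
  = \int_M \Bigl( \tfrac{E_N(z)}{\dn+1} - \tfrac{1}{\Vol(M)} \Bigr) f(z)\,\vm(z),
\end{gather*}
and $\tfrac{E_N}{\dn+1} - \tfrac{1}{\Vol(M)} = \tfrac{1}{\Vol(M)}(Q_N 1 - 1)$ by the previous paragraph. Hence the left-hand side is bounded in absolute value by $\frac{\|f\|_\infty}{\Vol(M)}\int_M |Q_N 1 - 1|\,\vm \le \|f\|_\infty\,\|Q_N 1 - 1\|_{C^0(M)}$, which is $\le \frac{C_0}{N}\|f\|_\infty$ by \eqref{eq:liu-ma} applied with $k=0$ to the constant function $1$. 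This proves (a) with $C=C_0$.

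For part (b), I would start from the algebraic identity
\begin{gather*}
  P_N f - f = \frac{Q_N f - f}{Q_N 1} - f\,\frac{Q_N 1 - 1}{Q_N 1}.
\end{gather*}
Viewing functions on $M$ as $S^1$-invariant functions on $X$, the $C^k(X)$ and $C^k(M)$ norms are comparable, so \eqref{eq:liu-ma} gives $\|Q_N f - f\|_{C^k(X)} \le \frac{C_k}{N}\|f\|_{C^{k+1}(X)}$ and, with $f\equiv 1$, $\|Q_N 1 - 1\|_{C^k(X)} \le \frac{C_k}{N}$. Since $C^k(X)$ is, up to a dimensional constant, a Banach algebra and $Q_N 1 \to 1$ there, the Neumann series shows $\|1/Q_N 1\|_{C^k(X)}$ is bounded uniformly in $N$ for $N$ large (the finitely many smaller $N$ with $E_N>0$ are absorbed into the constant). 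Applying the Leibniz estimate to the displayed identity then bounds $\|P_N f - f\|_{C^k(X)}$ by a constant multiple of $N^{-1}\bigl(\|f\|_{C^{k+1}(X)} + \|f\|_{C^k(X)}\bigr) \le \frac{C_k'}{N}\|f\|_{C^{k+1}(X)}$.

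The only genuinely delicate ingredient is the uniform bound on $\|1/Q_N 1\|_{C^k}$: this rests on part (g) of the Proposition (positivity of $E_N$ for $N$ large) together with the convergence $Q_N 1 \to 1$ in every $C^k$, which itself is just \eqref{eq:liu-ma} for the constant function. Everything else — the identity $\int_M K_N(z,\cdot)\,\vm = E_N$, the reduction $P_N f = Q_N f/Q_N 1$, and the final estimates — is bookkeeping with \eqref{eq:liu-ma} and the algebra property of the $C^k$ norms.
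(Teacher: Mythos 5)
Your proof is correct, and in one key step it takes a genuinely different route from the paper. Both arguments share the same skeleton: relate $P_N$ to Donaldson's $Q_N$ and invoke the Liu--Ma estimate \eqref{eq:liu-ma}. The difference is in how the normalizing factor is controlled. The paper writes $P_N f = \frac{\dn+1}{\Vol(M) E_N}\, Q_N f$ and shows $\frac{\dn+1}{\Vol(M)E_N} = 1 + O(1/N)$ in $\cinf(M)$ by quoting two external inputs: Zelditch's expansion $E_N = (N/2\pi)^m + O(N^{m-1})$ and Riemann--Roch for $\dn+1$; part (a) is obtained from the same two inputs via \eqref{eq:traccia}. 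You instead observe, via the reproducing property of the Szeg\"o projector, that $\int_M K_N(z,z')\,\vm(z') = E_N(z)$, hence $Q_N 1 = \frac{\Vol(M)}{\dn+1}E_N$ and $P_N f = Q_N f / Q_N 1$; applying \eqref{eq:liu-ma} to the constant function then yields exactly the needed asymptotics of the normalized quantity $\frac{\Vol(M)E_N}{\dn+1}$ in every $C^k$, so both (a) and (b) follow from Liu--Ma alone, with no appeal to Zelditch or Riemann--Roch. What each approach buys: yours is more economical in external inputs and makes transparent that $P_N$ is literally the normalization $Q_Nf/Q_N1$ (a nice identity the paper never states), at the cost of the Banach-algebra/Neumann-series bookkeeping for $\|1/Q_N1\|_{C^k}$ and of treating the finitely many small $N$ separately (where, strictly speaking, $P_N$ is only defined when $E_N>0$ --- the paper is equally loose on this point); the paper's route gives the sharper unnormalized statement about $E_N$ itself with full $\cinf$ control and explicit leading constant, and its final estimate avoids the quotient by working with the explicit factor $\frac{\dn+1}{\Vol(M)E_N}$. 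Your identity $\int_M K_N(z,\cdot)\,\vm = E_N$ and the algebraic decomposition of $P_Nf - f$ are both verified correctly, so I see no gap.
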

\begin{proof}
  The first statement follows from a fundamental result of Zelditch
  \cite{zelditch-theorem-Tian}: the sequence $\{E_N\}_N$ admits an
  asymptotic development in $\cinf(M)$ of the form
  \begin{gather}
    \label{eq:zelditch}
    E_N = \left (\frac{N}{2\pi}\right )^m + O(N^{m-1}).
  \end{gather}
  (Apply \cite[Cor. 2]{zelditch-theorem-Tian} with $G=\Ric(h)$.)  By
  Riemann-Roch
  \begin{equation}
    \label{eq:zelditch-2}
    \begin{gathered}
      \dn + 1 = h^0(M,L^N) =\frac{ \chern_1(L)^m }{m!}  \, N^m + O
      (N^{m-1}) =\\
      = \frac{\Vol(M)}{(2\pi)^m} \, N^m + O (N^{m-1}).
    \end{gathered}
  \end{equation}
  Using \eqref{eq:traccia} we get
  \begin{gather*}
    \frac{E_N}{\dn + 1} = \frac{1}{\Vol(M)} + O (\frac{1}{N})
  \end{gather*}
  in $\cinf(M)$. Integrating against $f$ yields (a).  To prove (b) it
  is enough to observe that
  \begin{gather*}
    P_N f = \frac{\dn + 1 }{\Vol(M) \cdot E_N} Q_Nf.
  \end{gather*}
  Therefore \eqref{eq:liu-ma} and \eqref {eq:zelditch-2} yield the
  result.
\end{proof}

\section{Final remarks}


\begin{say}
  Tian approximation theorem \cite{tian-polarized,bouche-convergence,
    zelditch-theorem-Tian, catlin-Tian} asserts that
  $\phi_N^*\omfsn\big /N \to \om$ in the $\cinf$ topology.  The
  metrics $\phi_N^* \omfsn$ are projectively induced hence of
  algebraic/polynomial character.  Theorem \ref{main} can be
  considered as an analogue for functions of Tian theorem. Indeed
  denote by $\susno $ the set of functions of the form $(\Mom_N \circ
  \phi_N)^* \la $ where $\la$ varies in the dual of $\suv$.  $\susno$
  is a finite dimensional subspace of $\cinfr$ whose elements are real
  algebraic since $\Mom_N$ is a real algebraic mapping. By
  construction the range of $\pzn$ is contained in $\susno$ and the
  range of $P_N$ is contained in 
  \begin{gather*}
    \susn := \R + \susno \subset
  \cinfr
  \end{gather*}
  where $\R$ denotes the constant functions. Hence for any $f \in
  \cinf(M)$, the function $P_Nf$ is real algebraic function and
  projectively induced in a sense.
\end{say}

\begin{say}
  The elements of $\susno $ are related to a particular kind of
  holomorphic functions $L^* \times \overline{L^*}$, the so-called
  Hermitian algebraic functions, see \cite[Def. 2.1
  p. 297]{varolin-geometry-hermitian}.
\end{say}

\begin{say}
  The construction of the operators $P_N^0$ and $P_N$ is of course
  inspired by Berezin-Toeplitz quantization and especially by a paper
  of Bordemann, Meinrenken, Schlichenmaier
  \cite{bordemann-meinrenken-schlichenmaier} (see also
  \cite{schlichenmaier-thesis}). In that paper the authors did not use
  explicitly the Toeplitz operators to construct new functions on
  $M$. Nevertheless what they showed in the proof of Theorem 4.1 is
  equivalent, in our notation, to the fact that $P_N f (z_0) \to
  f(z_0)$ at points $z_0\in M$ where $|f|$ attains its maximum.
\end{say}

\begin{say}
  Let $\tp_N : \cinf(X) \ra \cinf(X)$ be the operator with kernel
  \begin{gather*}
    \tp_N(x,x') = \frac{|\Pi_N(x,x')|^2}{\Pi_N(x,x)}.
  \end{gather*}
  Then $ \tp_N ( \pi^*f) = \pi^*\left( P_N f\right) $ for any $f\in
  \cinf(M)$ and the relation between $\tp_N$ and $\Pi_N$ is identical
  with that between the Poisson-Szeg\"o and the Szeg\"o kernels of a
  domain $D$, see e.g.  \cite[p. 79] {hua-harmonic-analysis} and
  \cite{krantz-construction-Hua}.  Since the Poisson-Szeg\"o operator
  reproduces holomorphic functions in $\hardy(\partial D)$, one might
  wonder if something similar holds for $\tp_N$. This is not the
  case. Indeed the trick used to prove the reproducing property for
  the Poisson-Szeg\"o operator \cite[p. 65]{krantz-function-theory}
  fails in our situation, since for $f\in \hardy_K(X)$ the function
  \begin{gather*}
    u(x') = f(x') \frac{ \overline{\Pi_N(x, x')}} {\Pi_N(x,x)}
  \end{gather*}
  belongs to $\hardy_{N+K}(X)$. So $\tp_N f =f $ if and only if $f\in
  \hardy_0(X)$. This means that $\tp_N$ reproduces only the constant
  functions.
\end{say}

\begin{say}
  One might try to prove (b) in Theorem \ref{main} by another path,
  using the following result.  Consider $T_{f,N}$ as an operator on
  $\cinf(X)$ and denote by $T_{f,N}(x,x')$ its Schwartz kernel.  The
  function $T_{f,N}(x,x)$ descends to a function $\T_N$ on $M$ and
  \begin{gather}
    \label{eq:asto}
    \T_N= \left (\frac{N}{2\pi}\right )^m\cdot f + O(N^{m-1})
  \end{gather}
  in the topology of $\cinf(M)$.  This follows from the main result in
  \cite{charles-Berezin-Toeplitz} and can also be established using
  the scaling asymptotics of the Szeg\"o kernel (established in \cite
  {shiffman-zelditch-Crelle, shiffman-zelditch-number}), see
  \cite[Lemma 7.2.4]{ma-marinescu-libro}. (See also \cite
  {ma-marinescu-Berezin-Toeplitz} for a study of higher terms in the
  asymptotic development.)  If $M_f: L^2(X) \ra L^2(X)$ denotes the
  operator of multiplication by $f$, then $T_{f,N} = \Pi_N M_f \Pi_N$
  so
  \begin{gather*}
    T_{f,N}(x,x'') = \int_X \Pi_N(x,x') f\left (\pi(x')\right )
    \Pi_N(x', x'')  \vx(x') \\
    \T_N(z) = T_{f,N}(x,x) = \int_X | \Pi_N(x,x')|^2 f\left (\pi(x')
    \right) \vx(x') = \\
    = \int_M K_N(z,z') f(z') \vm(z').
  \end{gather*}
  Therefore
  \begin{gather*}
    P_N f (z) = \frac{\T_N(z)} {E_N(z)}.
  \end{gather*}
  It follows from \eqref{eq:zelditch} and \eqref{eq:asto} that for
  every $f\in \cinf(M)$ and any $k$ there is $C'_k$ such that $||P_N f
  - f||_{C^k(X)} \leq C'_k/N$.  The constant $C'_k$ depends on $f \in
  \cinf(M)$ and we would like to show that it can be chosen in the
  form $C'_k=C_k ||f||_{C^{k+1}(M)}$. Unfortunately it is not clear
  how to accomplish this last step, so this method of proof is
  incomplete.
\end{say}

\begin{say}
  If $M =\PP^1, L=\OO_{\PP^1}(2)$ and $\om = 2 \om_{FS}$, i.e. if
  $M=S^2$ with the round metric, it would be interesting to relate
  this approximation procedure to more classical constructions.  The
  Szeg\"o kernel of $(\PP^1, \OO_{\PP^1}(1) , \om_{FS})$ is
  \begin{gather*}
    \Pi_N (x,x') = \frac{N+1}{2\pi} \sx x, x'\xs^N
  \end{gather*}
  where $x, x'\in X = S^3 \subset \C^2$ and $\sx \ , \ \xs$ denotes
  the Hermitian product on $\C^2$ (see
  e.g. \cite[p. 65]{schlichenmaier-thesis}).  So for $(M,L, \om)=\left
    (\PP^1, \OO_{\PP^1}(2), 2\om_{FS}\right )$ 
  \begin{gather*}
    \Pi_N (x,x') = \frac{2N+1}{4\pi} \sx x, x'\xs^N
  \end{gather*}
  where $x, x'\in X = S^3 /\{\pm 1\} = \operatorname{SO}(3)$.  Hence
  $P_N=Q_N$ and the Schwartz kernel is
  \begin{gather*}
    P_N (z,z') = \frac{2N+1}{4\pi} \left |\sx x, x'\xs \right |^{2N} 
    \qquad \pi(x) =z, \pi(x') = z'.
  \end{gather*}
  If we identify $\PP^1$ with the sphere $S^2 \subset \R^3$ in the
  standard way, this becomes
  \begin{gather*}
    P_N(y,y') = \frac{2N+1}{4\pi} \left ( \frac {1 + y\cdot y'} {2}
    \right)^N \qquad y,y' \in S^2
  \end{gather*}
  where $y \cdot y' $ denotes the scalar product in $\R^3$.  Apart
  from this expression, which looks quite nice, one might try to
  express the operators $P_N$ in terms of spherical harmonics on
  $S^2$. In fact this computation has already been carried out by
  Donaldson in \cite[p. 612ff]{donaldson-numerical-results} (recall
  that $P_N=Q_N$ in this case).  Since $L^2(S^2) =
  \bigoplus_{m=0}^\infty \harm_m$, where $\harm_m$ is the space of
  degree $k$ spherical harmonics, and the operators $P_N$ are
  $\SO(3)$-equivariant, $P_N(\harm_m ) \subset \harm_m$ and $P_N$ acts
  on $\harm_m$ as multiplication by a scalar $\chi_{m,2N}$.  The range
  of $P_N$ is $\sum_{m=0}^{2N} \harm_m$. One might expect (and the
  writer did hope) that $P_N$ is just orthogonal projection onto its
  range. This is equivalent to $\chi_{m,2N} = 1$ for $m\leq 2N$, but
  Donaldson's computation shows this to be false. The only thing which
  is evident from Donaldson's formula is that $\chi_{m,2N} \to 1$ for
  $N\to \infty$, which is the equivalent to Theorem 5 in the case at
  hand.
\end{say}

\def\cprime{$'$}

\vskip.3cm

\noindent Universit\`a di Milano Bicocca,\\
\textit{E-mail:} \texttt{alessandro.ghigi@unimib.it}

\end{document}